\documentclass[11pt]{amsart}

\usepackage[T1]{fontenc}
\usepackage[utf8]{inputenc}
\usepackage{amsmath,amssymb,amsfonts,amsthm,amscd}
\usepackage{graphicx,url}
\usepackage{indentfirst}

\usepackage{geometry}
\allowdisplaybreaks
\theoremstyle{plain}
\newtheorem{thm}{Theorem}[section]
\newtheorem{lem}[thm]{Lemma}

\newtheorem{prop}[thm]{Proposition}

\theoremstyle{definition}

\newtheorem{defn}[thm]{Definition}

\begin{document}

	\title[Rational injectivity in equivariant $KK$-theory]
	{\uppercase{A}\lowercase{n elementary proof of rational injectivity
			in equivariant }$KK$\lowercase{-theory}}

	\author{Denghui Luo}
	\address{Department of Mathematics, Jilin University, Changchun, 130012, China} 
	\email{denghuiluo@foxmail.com}
	
	\thanks{}
	
	\author{Geng Tian}
	
	\address{Department of Mathematics, Liaoning University, Liaoning, 110036, China}
	\email{gengtian.ncg@gmail.com}
	
	\thanks{The firt author is supported
		in part by National Natural Science Foundation of China (No. 12471123).
		The second author was supported in part by Grant for Excellent Young Scholars in Tianyuan Mathematics.}

	
	\keywords{$K$-homology, $K$-theory}

	\begin{abstract}
		Antonini, Azzali and Skandalis proved that, for every countable discrete group $\Gamma$, the canonical map $E\Gamma\to\underline{E}\Gamma$ induces a split injection in equivariant $KK$-theory with real coefficients.
		In this paper, we give a more elementary proof of the corresponding rational injectivity result in ordinary equivariant $KK$-theory without using  $\mathrm{II}_1$ factors.
	\end{abstract}
	
	\maketitle

	\section{Introduction}

	Throughout, $\Gamma$ is a countable discrete group.
	A $\Gamma$-$C^*$-algebra is a $C^*$-algebra equipped
	with an action of $\Gamma$ by $*$-automorphisms.
	An extremely potent tool in noncommutative geometry, particularly in relation with the Novikov conjecture, is Kasparov's equivariant $KK$-theory \cite{Kas88}.
	For separable $\Gamma$-$C^*$-algebras $A$ and $B$,
	Kasparov's equivariant $KK$-theory 
	defines an abelian group $KK^\Gamma(A,B)$ consisting
	of homotopy classes of equivariant Kasparov
	modules.
	It is contravariant in $A$ and covariant in $B$
	with respect to equivariant $*$-homomorphisms,
	and is homotopy invariant, stable and split exact.
	Equivariantly semisplit extensions give six-term
	exact sequences in both variables.
	The theory also satisfies Bott periodicity:
	\[
	KK^\Gamma(A,B)
	\cong KK^\Gamma(\Sigma^2 A,B)
	\cong KK^\Gamma(\Sigma A,\Sigma B)
	\cong KK^\Gamma(A,\Sigma^2 B),
	\]
	where $\Sigma^k A=C_0(\mathbb{R}^k,A)$ and
	$\Gamma$ acts trivially on $\mathbb{R}^k$. For $n\in\mathbb{Z}$, we write
	\[
	KK_n^\Gamma(A,B)
	:=
	KK^\Gamma(A,\Sigma^k B),
	\qquad k\in\{0,1\},\quad k\equiv n\pmod 2.
	\]
	
	The Kasparov product is an associative bilinear pairing
	\[
	KK^\Gamma(A,B)\otimes_{\mathbb{Z}}KK^\Gamma(B,C)
	\longrightarrow KK^\Gamma(A,C),
	\qquad
	x\otimes y\longmapsto x\otimes_B y.
	\]
	An equivariant $*$-homomorphism $\varphi:A\to B$
	determines a class $[\varphi]\in KK^\Gamma(A,B)$.
	When $\Gamma$ is trivial, we write $KK(A,B)$.
	For a general $\Gamma$, there is
	a natural forgetful homomorphism
	$KK^\Gamma(A,B)\to KK(A,B)$.

	A subset $S$ of a $\Gamma$-space $X$ is called
	$\Gamma$-compact if $S\subseteq\Gamma\cdot K$ for some compact
	subset $K\subseteq X$.
	A $\Gamma$-CW-complex is called finite if it has finitely
	many $\Gamma$-orbits of cells, and proper if all its
	stabilizer subgroups are finite;
	see \cite[Section~1]{LOCompletion01}.
	
	\begin{defn}\label{defn:KK-Gam-compact}
		Let $X$ be a Hausdorff $\Gamma$-space, let $B$ be
		a $\Gamma$-$C^*$-algebra, and let $n\in\mathbb{Z}$.
		Choose $k\in\{0,1\}$ with $k\equiv n\pmod 2$, and define
		\[
		KK_n^\Gamma(X,B)
		:=
		\varinjlim_{Y,A}
		KK^\Gamma\bigl(C_0(Y),\Sigma^k A\bigr),
		\]
		where $Y$ ranges over the closed, $\Gamma$-invariant,
		$\Gamma$-compact subspaces of $X$ that are locally compact
		and second countable, and $A$ ranges over the separable
		$\Gamma$-invariant $C^*$-subalgebras of $B$,
		both ordered by inclusion.
	\end{defn}
	
	Let $E\Gamma$ be the universal space for proper and free actions of the group $\Gamma$, and let $\underline{E}\Gamma$ be the universal space for proper actions of the group $\Gamma$. 
	Then there exists a natural $\Gamma$-equivariant continuous map $\pi: E\Gamma \to \underline{E}\Gamma$.
	Antonini, Azzali and Skandalis
	\cite[Theorem~5.4]{AAS20} proved that the induced map
	\[
	\pi_{*,\mathbb{R}}:
	KK^\Gamma_{*,\mathbb{R}}(E\Gamma,A)
	\longrightarrow
	KK^\Gamma_{*,\mathbb{R}}(\underline E\Gamma,A)
	\]
	is split injective, where the groups are defined using
	equivariant $KK$-theory with real coefficients.
	In this paper, we give an elementary proof of rational
	injectivity in ordinary equivariant $KK$-theory,
	without using real coefficients.
	We state the main result as follows.
	
	\begin{thm}\label{thm:main}
		Let $\Gamma$ be a countable discrete group.
		For every $\Gamma$-$C^*$-algebra $A$,
		the homomorphism
		\[
		\pi_*:KK_*^\Gamma(E\Gamma,A)\otimes_{\mathbb Z}\mathbb{Q}
		\longrightarrow KK_*^\Gamma(\underline E\Gamma,A)\otimes_{\mathbb Z}\mathbb{Q}
		\]
		is injective,
		where $\pi_*$ is induced by the 
		natural $\Gamma$-equivariant continuous map $\pi: E\Gamma \to \underline{E}\Gamma$.
	\end{thm}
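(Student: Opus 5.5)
We construct a rational left inverse to $\pi_*$ by ``transferring'' along $\pi$, following the strategy of \cite{AAS20} but with a finite-group averaging argument in place of the $\mathrm{II}_1$-factor trace. Since both sides are colimits over $\Gamma$-compact pieces, a class in the kernel of $\pi_*\otimes\mathbb{Q}$ is represented by some $x\in KK^\Gamma(C_0(Y),\Sigma^k A)$ with $Y\subseteq E\Gamma$ $\Gamma$-compact and $A$ separable. Some integer multiple $Nx$ then maps to zero in $KK^\Gamma(C_0(Z),\Sigma^k A')$ for a $\Gamma$-compact $Z\subseteq\underline E\Gamma$ with $\pi(Y)\subseteq Z$ and some larger $A'$. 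So it suffices to work with finite proper $\Gamma$-CW-complexes and to show that $\pi^*$ admits, rationally, a left inverse compatible with these colimits. Because $E\Gamma\times\underline E\Gamma\simeq E\Gamma$ equivariantly, we may replace $\pi$ by the projection $p:E\Gamma\times Z\to Z$ restricted to $\Gamma$-compact pieces.

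\emph{Step 1 (local structure).} By the equivariant Mayer--Vietoris sequence, which comes from the six-term sequences for the semisplit extensions attached to $\Gamma$-invariant open subsets, together with a five-lemma induction over the finitely many $\Gamma$-orbits of cells of $Z$, we reduce to cells of the form $\Gamma\times_H D^m$ with $H$ finite. Over such a cell, $p$ is, up to $\Gamma$-homotopy, the map $\Gamma\times_H(E\Gamma\times D^m)\to\Gamma\times_H D^m$. Using induction/restriction isomorphisms $KK^\Gamma(C_0(\Gamma\times_H W),B)\cong KK^H(C_0(W),B)$, the question becomes the following: is the map $KK^H(C_0(E\Gamma|_{H}\text{-cpt}),B)\to KK^H(\mathbb C,B)$ induced by collapsing $E\Gamma$, a free $H$-space that is contractible, rationally injective with a natural splitting?

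\emph{Step 2 (finite group transfer).} For finite $H$ acting freely on a compact $H$-invariant piece $Y'$ of $E\Gamma$, I would construct a class $\tau\in KK^H(C_0(Y'),\mathbb C)$ given by the projection onto $H$-invariants after using a cutoff function $c$ with $\sum_h h(c)^2=1$. Concretely, $\tau$ comes from the $H$-equivariant inclusion $\mathbb C\to C_0(Y')$ given by a constant (when $Y'$ is compact), twisted by the regular representation. Composing then yields $|H|$ times the identity on $KK^H(\mathbb C,B)$ modulo the contribution of the regular representation. Because $Y'$ is contractible through $\Gamma$-compact homotopies inside $E\Gamma$, the induced map factors through $KK^H(\mathbb{C},B)$, and it becomes invertible after inverting $|H|$. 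This rational invertibility is where $\otimes\mathbb Q$ is genuinely needed, in place of the real coefficients and traces of \cite{AAS20}.

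\emph{Step 3 (gluing).} The local splittings must be shown to be natural with respect to cell attaching maps, so that they assemble by the Mayer--Vietoris induction into a global rational left inverse for $p_*$. I expect this step to be the main obstacle. The factors $1/|H|$ differ from cell to cell, and the transfers are defined only up to homotopy. My first attempt would define a single global class $\theta\in KK^\Gamma_{\mathbb Q}(C_0(Z),C_0(E\Gamma\times Z))$, again via a $\Gamma$-cutoff function on $E\Gamma\times Z$ normalized fiberwise by $1/|\Gamma_z|$. I would then check by the local computation that $\theta\otimes[p]$ is the identity, rather than gluing splittings by hand. If the normalization fails to be continuous across strata, I would fall back on the direct cell-by-cell five-lemma argument, which only requires injectivity and not a natural splitting.
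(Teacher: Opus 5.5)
Your reduction to $p_X\colon E\Gamma\times X\to X$ for finite proper $X$, and from orbits to $H$-equivariant groups, is the right one. The argument breaks at Step~3, however, and the fallback you name there is not valid: the five lemma does not propagate injectivity by itself. In the ladder for $(X,Z)$ with $Z=X^{(m-1)}$, the four lemma gives injectivity of $(p_X)_*$ from injectivity on $Z$ and on the cells only when, in addition, the vertical map on the \emph{preceding} cell term, $KK^\Gamma_{n+1-m}(E\Gamma\times C,B)_{\mathbb{Q}}\to KK^\Gamma_{n+1-m}(C,B)_{\mathbb{Q}}$, is surjective. It is not. Over $\Gamma/H$ the image of $p_*$ is only the $\varepsilon_H$-part, where $\varepsilon_H=[\mathbb{C}H]/|H|$. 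Already for $\Gamma=H$ finite and $B=\mathbb{C}$, the source $KK_0^H(EH,\mathbb{C})_{\mathbb{Q}}$ is one-dimensional (it is the rational $K$-homology of $BH$), and it maps onto the line $\mathbb{Q}[\mathbb{C}H]\subseteq R(H)_{\mathbb{Q}}$.

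The induction only closes for a stronger statement: $(p_X)_*$ is an isomorphism \emph{onto} $e_XKK^\Gamma_n(X,B)_{\mathbb{Q}}$. Here $e_X\in K^0_\Gamma(X)_{\mathbb{Q}}$ must be an idempotent that restricts to $\varepsilon_H$ on $\Gamma/H$, is natural under pullback, equals $1$ on free complexes, and acts compatibly with push-forwards and boundary maps, so that the $e$-summands again form exact rows. Producing this $e_X$ is exactly your worry that a $1/|\Gamma_z|$ normalization is discontinuous across strata. At the level of bundles or cutoff functions it is. The missing idea is to define $e_X$ instead as the preimage of the compatible family $(\varepsilon_H)$ in Bredon $H^0$ under the L\"uck--Oliver rational Chern character isomorphism $K^*_\Gamma(X)_{\mathbb{Q}}\cong H^*_\Gamma(X;\mathbb{Q}\otimes R(-))$. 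Raven's module and boundary compatibilities then give exactness of the $e$-rows.

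Step~2 also has to be redone. The contractibility of $E\Gamma$ is only non-equivariant: a free $H$-space has no $H$-fixed point, so it is not $H$-contractible for $H\neq 1$. You must therefore pass through the forgetful map to $KK_n(E\Gamma,B)\cong KK_n(\mathrm{pt},B)$ and come back by an induction transfer. The composite transfer$\circ$forget is multiplication by the bundle $\Delta\times\mathbb{C}H$ with diagonal action. On free pieces this equals $|H|$ only rationally: for $\mathbb{Z}/2$ acting antipodally on $S^n$ it is $1+L_{\mathbb{C}}\neq 2$ in $K^0(\mathbb{RP}^n)$ for $n\geq 2$. In the paper this is again deduced from $e=1$ on free complexes.

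For the same reason, your Plan~A cannot be a Kasparov product with a class on any fixed $\Gamma$-compact piece. Rational injectivity fails on such pieces: for $\mathbb{Z}/2$ acting antipodally on $S^1$, $p_*\colon KK_1^{\mathbb{Z}/2}(C(S^1),\mathbb{C})\cong K_1(S^1)\cong\mathbb{Z}\to KK_1^{\mathbb{Z}/2}(\mathbb{C},\mathbb{C})=0$. Injectivity holds only in the colimit. Finally, the unit map $\mathbb{C}\to C(Y')$ defines a class in $KK^H(\mathbb{C},C(Y'))$ that induces $p_*$ itself, so it is not a candidate for $\tau\in KK^H(C(Y'),\mathbb{C})$.
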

	
	We choose countable $\Gamma$-CW-models for $E\Gamma$ and
	$\underline E\Gamma$.
	Since every $\Gamma$-compact subset of either model
	is contained in a finite $\Gamma$-subcomplex,
	the limit over $Y$ in
	Definition~\ref{defn:KK-Gam-compact} may be taken over
	the finite $\Gamma$-subcomplexes of the chosen model.

	\emph{Acknowledgements.} 
	We thank Professor Guoliang Yu for very inspiring discussions.
	
	\emph{AI disclosure.}  ChatGPT was used solely for language editing and grammatical
	refinement during the preparation of this manuscript. All mathematical proofs and
	core results were developed by the authors.

	\section{Preliminaries}
	
	We recall exact sequences in the first variable of equivariant
	$KK$-theory and the action of equivariant vector bundles,
	including its compatibility with boundary maps.
	We then construct rational idempotents using the equivariant
	Chern character.
	For an abelian group $M$, we write
	$M_{\mathbb{Q}}=M\otimes_{\mathbb{Z}}\mathbb{Q}$.
	If an idempotent $e$ acts on $M$, we write
	$eM=\operatorname{Im}(e:M\to M)$.
	
	\subsection{Exact sequences of $KK$-theory in the first variable}
	\label{subsec:extensions}
	
	For nuclear proper $\Gamma$-$C^*$-algebras in the first
	variable, \cite[Corollary~A.4]{KS2003} identifies
	equivariant $KK$-theory with equivariant $E$-theory.
	Under this identification,
	\cite[Proposition~5.8 and Theorem~6.18]{GHT00}
	give the following statement.
	
	\begin{prop}[\cite{GHT00}]\label{prop:extension}
		Let $\Gamma$ be a countable discrete group, and let
		\begin{equation}\label{eq:extension}
			0\longrightarrow J\xrightarrow{j}D\xrightarrow{q}Q
			\longrightarrow0
		\end{equation}
		be an equivariant extension of separable nuclear proper
		$\Gamma$-$C^*$-algebras.
		There is a natural boundary  class
		$\delta_q\in KK_1^\Gamma(Q,J)$ such that, for every separable
		$\Gamma$-$C^*$-algebra $B$, the sequence
		\[
		\begin{aligned}
			\cdots&\longrightarrow KK_n^\Gamma(Q,B)
			\xrightarrow{q^*}KK_n^\Gamma(D,B)
			\xrightarrow{j^*}KK_n^\Gamma(J,B)
			\xrightarrow{\partial}KK_{n-1}^\Gamma(Q,B)
			\longrightarrow\cdots
		\end{aligned}
		\]
		is exact, where the boundary map $\partial$ is determined by
		\begin{equation}\label{eq:boundary-product}
			\partial(x)=\delta_q\otimes_J x.
		\end{equation}
		For a morphism between two such extensions, with induced maps
		$\phi:J\to J'$ and $\psi:Q\to Q'$, naturality means that
		\begin{equation}\label{eq:extension-naturality}
			\delta_q\otimes_J[\phi]
			=
			[\psi]\otimes_{Q'}\delta_{q'}.
		\end{equation}
	\end{prop}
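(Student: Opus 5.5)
The statement is quoted from \cite{GHT00} via the identification of \cite[Corollary~A.4]{KS2003}, so the plan is to transport the six-term sequence of equivariant $E$-theory to $KK^\Gamma$ rather than build it inside $KK$-theory, where it can fail for non-semisplit extensions. First, since $J$, $D$, $Q$ are separable, nuclear and proper, \cite[Corollary~A.4]{KS2003} gives natural isomorphisms $KK^\Gamma_n(C,B)\cong E^\Gamma_n(C,B)$ for $C\in\{J,D,Q\}$ and every separable $B$. These are compatible with the maps induced by the equivariant $*$-homomorphisms $j$, $q$ and with Kasparov products, whenever the first variable of each factor is nuclear and proper. All products below are of this form: $\delta_q\in KK_1^\Gamma(Q,J)$, and $x\in KK^\Gamma_*(J,B)$.

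Second, in equivariant $E$-theory every equivariant extension has a boundary class $\delta^E_q\in E^\Gamma_1(Q,J)$, and the long exact sequence in the first variable holds without any semisplitting hypothesis. This is \cite[Theorem~6.18]{GHT00}. Its boundary map is the $E$-theory product with $\delta^E_q$. Naturality of $\delta^E_q$ under morphisms of extensions is \cite[Proposition~5.8]{GHT00}, where $\delta^E_q$ is defined via the asymptotic morphism on the mapping cone, or equivalently via the Connes--Higson construction.

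Third, I would define $\delta_q$ as the preimage of $\delta^E_q$ under the isomorphism $KK_1^\Gamma(Q,J)\cong E_1^\Gamma(Q,J)$. This is legitimate because $Q$ is nuclear and proper, and $J$ is separable. Transporting the $E$-theory exact sequence along the natural isomorphisms yields the stated exact sequence. Compatibility with products gives \eqref{eq:boundary-product}. Applying the isomorphism to both sides of the $E$-theoretic naturality identity gives \eqref{eq:extension-naturality}, since $[\phi]$ and $[\psi]$ correspond to the $E$-classes of the same homomorphisms.

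The main obstacle is checking that the comparison map $KK^\Gamma\to E^\Gamma$ is multiplicative for the products in question. This is needed to turn the $E$-theoretic boundary formula into \eqref{eq:boundary-product}, and it must hold with the nuclear proper algebra sitting in the first variable. I would rely on the functoriality of the natural transformation $KK^\Gamma\to E^\Gamma$, namely that it is a functor between the categories compatible with products. Together with the fact that \cite{KS2003} prove it is an isomorphism for such first variables, this reduces the claim to bookkeeping.
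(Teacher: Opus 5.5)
Your proposal is correct and follows essentially the same route as the paper. The paper also obtains this proposition by transporting the first-variable exact sequence and the naturality of the $E$-theoretic boundary class from \cite[Proposition~5.8 and Theorem~6.18]{GHT00} along the identification $KK^\Gamma\cong E^\Gamma$ for separable nuclear proper first variables \cite[Corollary~A.4]{KS2003}; your check that every group involved has such a first variable, and that the comparison functor respects products and homomorphism classes, is the bookkeeping the paper leaves implicit. One small correction: the functor $KK^\Gamma\to E^\Gamma$ is multiplicative for all separable $\Gamma$-$C^*$-algebras, so nuclearity and properness are needed only for it to be bijective.
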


	\subsection{Vector bundles and $KK_0^\Gamma(C_0(X),C_0(X))$}
	Throughout this subsection, all $\Gamma$-spaces are assumed
	to be second countable, locally compact and Hausdorff.
	We recall some standard facts from
	\cite[Section~3.4]{Raven04}; see also Kasparov's
	original paper \cite{Kas88}.
	
	For a proper $\Gamma$-space $X$, let $K_\Gamma^0(X)$ be the
	Grothendieck group associated to the monoid of isomorphism
	classes of finite-dimensional complex $\Gamma$-vector bundles
	over $X$.
	Tensor product of $\Gamma$-vector bundles gives $K_\Gamma^0(X)$
	the structure of a commutative ring.
	
	Let $V\to X$ be a $\Gamma$-equivariant complex vector bundle,
	equipped with a $\Gamma$-invariant Hermitian metric.
	The space $\Gamma_0(V)$ of continuous sections vanishing at
	infinity is a Hilbert $C_0(X)$-module, and multiplication defines
	a $\Gamma$-equivariant homomorphism
	\[
	\rho:C_0(X)\longrightarrow \mathcal{K}\big(\Gamma_0(V)\big),
	\]
	where $\mathcal{K}\big(\Gamma_0(V)\big)$ denotes the $C^*$-algebra of all compact operators on the
	Hilbert module $\Gamma_0(V)$.
	With the trivial grading and $F=0$, this gives an equivariant
	Kasparov module in the sense of
	\cite[Definition~2.2]{Kas88}, and hence a class
	\[
	\widehat{[V]}
	:=
	[(\Gamma_0(V),\rho,0)]
	\in KK_0^\Gamma\big(C_0(X),C_0(X)\big),
	\]
	where $[V]$ denotes the class of the vector bundle $V$ in $K_\Gamma^0(X)$.
	We now have
	a unital ring homomorphism
	\[
	K_\Gamma^0(X)
	\longrightarrow KK_0^\Gamma\big(C_0(X),C_0(X)\big),
	\qquad
	\xi=[V]-[W]\longmapsto
	\widehat{\xi}:=\widehat{[V]}-\widehat{[W]},
	\]
	where multiplication on the target is the Kasparov product.
	
	For every $\Gamma$-$C^*$-algebra $B$, we define
	\begin{equation}\label{eq:action}
		\xi\cdot x
		:=
		\widehat{\xi}\otimes_{C_0(X)}x,
		\qquad
		\xi\in K_\Gamma^0(X),\quad
		x\in KK_n^\Gamma\big(C_0(X),B\big).
	\end{equation}
	By the bilinearity and associativity of the Kasparov product,
	this makes $KK_n^\Gamma(C_0(X),B)$ a unital
	$K_\Gamma^0(X)$-module.
	Extending scalars gives an action of
	$K_\Gamma^0(X)_{\mathbb{Q}}$ on
	$KK_n^\Gamma(C_0(X),B)_{\mathbb{Q}}$.

	\begin{prop}[{\cite[Lemma~3.4.4(a) and
			Proposition~3.4.5]{Raven04}}]\label{prop:action}
		Let $\Gamma$ be a countable discrete group, $X$ a proper
		$\Gamma$-space, $B$ a separable $\Gamma$-$C^*$-algebra,
		and $n\in\mathbb{Z}$.
		Then the following statements hold.
		\begin{enumerate}
			\item
			Let $Y$ be another  proper $\Gamma$-space, and let $f:Y\to X$
			be a proper $\Gamma$-equivariant continuous map.
			For every $\xi\in K_\Gamma^0(X)$ and $x\in KK_n^\Gamma(C_0(Y),B)$, one has
			\begin{equation*}\label{eq:projection}
				f_*(f^*\xi\cdot x)=\xi\cdot f_*x,
			\end{equation*}
			where $f^*\xi$ is defined by pullback of vector bundles,
			and
			\[
			f_*:KK_n^\Gamma\big(C_0(Y),B\big)
			\longrightarrow KK_n^\Gamma\big(C_0(X),B\big)
			\]
			is induced by the pullback homomorphism
			$C_0(X)\to C_0(Y)$, $h\mapsto h\circ f$.
			
			\item
			Let $Z\subseteq X$ be a closed $\Gamma$-invariant subspace,
			and put $U=X\setminus Z$.
			Let
			\[
			\partial:KK_n^\Gamma(C_0(U),B)
			\longrightarrow KK_{n-1}^\Gamma(C_0(Z),B)
			\]
			be the boundary map given by \eqref{eq:boundary-product}.
			For every $\xi\in K_\Gamma^0(X)$ and
			$x\in KK_n^\Gamma(C_0(U),B)$,  one has
			\begin{equation*}\label{eq:boundary-module}
				\partial(\xi|_U\cdot x)
				=
				\xi|_Z\cdot\partial(x),
			\end{equation*}
			where $\xi|_U$ and $\xi|_Z$ denote the restrictions of
			$\xi$ to $U$ and $Z$, respectively.
		\end{enumerate}
		Both identities extend $\mathbb{Q}$-bilinearly to
		$\xi\in K_\Gamma^0(X)_{\mathbb{Q}}$ and the corresponding
		rationalized $KK$-groups.
	\end{prop}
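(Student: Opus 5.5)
The plan is to reduce both identities to equalities of classes in $KK^\Gamma$ between function algebras. These equalities can then be checked on explicit Kasparov modules with zero operator, using associativity of the Kasparov product and the naturality \eqref{eq:extension-naturality} of Proposition~\ref{prop:extension}. By additivity of $\xi\mapsto\widehat\xi$ and bilinearity of the product, it suffices to treat $\xi=[V]$ for a single $\Gamma$-vector bundle $V\to X$ with invariant Hermitian metric. The rational statements then follow by extending scalars.

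\emph{Part (1).} Write $f_*x=[f^\sharp]\otimes_{C_0(Y)}x$, where $f^\sharp:C_0(X)\to C_0(Y)$ is $h\mapsto h\circ f$. By associativity, it is enough to show
\[
\widehat{[V]}\otimes_{C_0(X)}[f^\sharp]=[f^\sharp]\otimes_{C_0(Y)}\widehat{[f^*V]}
\quad\text{in } KK_0^\Gamma\bigl(C_0(X),C_0(Y)\bigr).
\]
Both sides are Kasparov products in which every module has $F=0$ and the left action is by compact operators. Hence each product is represented by the interior tensor product with $F=0$, since $0$ is trivially an $F$-connection here. The right side is $C_0(Y)\otimes_{C_0(Y)}\Gamma_0(f^*V)\cong\Gamma_0(f^*V)$, with $C_0(X)$ acting through $f^\sharp$. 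This action is by compacts precisely because $f$ is proper. The left side is $\Gamma_0(V)\otimes_{C_0(X)}C_0(Y)$. The map $s\otimes h\mapsto (s\circ f)\,h$ is an equivariant isometry of Hilbert $C_0(Y)$-modules into $\Gamma_0(f^*V)$ that intertwines the left actions. Its surjectivity follows from local triviality of $V$ and a partition of unity argument. So the two modules are unitarily equivalent, which proves (1).

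\emph{Part (2).} Let $\delta\in KK_1^\Gamma(C_0(Z),C_0(U))$ be the boundary class of $0\to C_0(U)\to C_0(X)\to C_0(Z)\to0$. We must show
\[
\delta\otimes_{C_0(U)}\widehat{[V|_U]}=\widehat{[V|_Z]}\otimes_{C_0(Z)}\delta .
\]
To do this, form the linking algebra $L_X=\mathcal K(\Gamma_0(V)\oplus C_0(X))$. It contains $C_0(X)$ and $\mathcal K(\Gamma_0(V))$ as full corners, and restriction to $U$ and $Z$ gives an equivariant extension $0\to L_U\to L_X\to L_Z\to0$ of separable nuclear proper $\Gamma$-algebras. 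The corner inclusions $\iota:C_0(\cdot)\to L_{\cdot}$ and $\kappa:\mathcal K(\Gamma_0(V|_\cdot))\to L_\cdot$ are morphisms of extensions. The homomorphism $\rho:C_0(\cdot)\to\mathcal K(\Gamma_0(V|_\cdot))$ is also a morphism of extensions, from the function-algebra extension to the compact-operator extension. By the standard fact on full corners, $[\iota]$ is a $KK^\Gamma$-equivalence. Directly from the definitions one has $\widehat{[V|_W]}=[\rho_W]\otimes[\kappa_W]\otimes[\iota_W]^{-1}$ for $W=U,Z$, since the bimodule $\Gamma_0(V|_W)$ is the off-diagonal corner. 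Applying \eqref{eq:extension-naturality} successively to $\rho$, to $\kappa$, and to $\iota$ shows that $\delta$ intertwines each factor. For $\iota$ we use the identity $\delta\otimes[\iota_U]=[\iota_Z]\otimes\delta_L$, inverted on both sides. Composing the three relations gives the required identity, and then $\partial(\xi|_U\cdot x)=\delta\otimes\widehat{\xi|_U}\otimes x=\widehat{\xi|_Z}\otimes\delta\otimes x=\xi|_Z\cdot\partial x$.

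I expect the main obstacle to be Part (2). The delicate points are identifying $\widehat{[V]}$ with the Morita-type composite through the linking algebra, including gradings and equivariance, and checking that all the algebras involved satisfy the nuclearity and properness hypotheses of Proposition~\ref{prop:extension}. The latter should hold because they are continuous fields of elementary algebras over proper $\Gamma$-spaces.
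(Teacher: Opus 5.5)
Your proposal is correct, and its skeleton matches the paper's. Both arguments use associativity of the Kasparov product to reduce the two identities to the commutation relations $\widehat{\xi}\otimes_{C_0(X)}[f^\#]=[f^\#]\otimes_{C_0(Y)}\widehat{f^*\xi}$ and $\widehat{\xi|_Z}\otimes_{C_0(Z)}\delta_q=\delta_q\otimes_{C_0(U)}\widehat{\xi|_U}$. The difference is that the paper simply quotes these relations from Raven (Lemma~3.4.4(a) and Proposition~3.4.5), while you prove them.

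Your proof of the first relation is the standard computation: with zero operators, both products are interior tensor products, and you identify $\Gamma_0(V)\otimes_{C_0(X)}C_0(Y)$ with $\Gamma_0(f^*V)$.

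Your proof of the second relation is a genuinely different, self-contained route. Passing through the linking algebra writes $\widehat{[V|_W]}=[\kappa_W\circ\rho_W]\otimes[\iota_W]^{-1}$. This holds because $[\iota_W]^{-1}$ is represented by $L_Wp\cong\Gamma_0(V|_W)\oplus C_0(W)$, on which $\kappa\rho(a)$ acts as $\rho(a)\oplus 0$, so the extra summand is degenerate. The decomposition expresses $\widehat{[V]}$ through classes of $*$-homomorphisms and the inverse of one. The naturality \eqref{eq:extension-naturality} of Proposition~\ref{prop:extension}, which is stated only for $*$-homomorphisms, then applies three times.

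What this buys is independence from Raven's Proposition~3.4.5, using only tools already in the paper plus the equivariant full-corner Morita equivalence. The cost is that you must check that $\mathcal K(\Gamma_0(V))$ and $L_X$ are separable, nuclear and proper, and that their restriction sequences are exact. These checks are routine, since both are section algebras of finite-rank endomorphism bundles over proper $\Gamma$-spaces. The paper's version is shorter but rests entirely on the citation.

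One small inaccuracy: $\mathcal K(\Gamma_0(V))$ is not a full corner of $L_X$ when $V$ has rank zero over some components. The ideal it generates in the $C_0(X)$-corner consists only of functions vanishing where the fibres are zero. You never use this claim, however: only $[\iota]$ is inverted, and $C_0(W)$ is always a full corner of $L_W$.
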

	\begin{proof}
		For~(1), let
		$[f^\#]\in KK_0^\Gamma(C_0(X),C_0(Y))$
		be the class of the pullback homomorphism
		\[
		f^\#:C_0(X)\longrightarrow C_0(Y),
		\qquad f^\#(h)=h\circ f.
		\]
		By \cite[Lemma~3.4.4(a)]{Raven04}, 
		\[
		\widehat{\xi}\otimes_{C_0(X)}[f^\#]
		=
		[f^\#]\otimes_{C_0(Y)}\widehat{f^*\xi}.
		\]
		Since $f_*x=[f^\#]\otimes_{C_0(Y)}x$, it follows from \eqref{eq:action} and the associativity
		of the Kasparov product that
		\[
		\begin{aligned}
			f_*(f^*\xi\cdot x)
			&=
			[f^\#]\otimes_{C_0(Y)}
			\bigl(\widehat{f^*\xi}\otimes_{C_0(Y)}x\bigr)\\
			&=
			\bigl([f^\#]\otimes_{C_0(Y)}\widehat{f^*\xi}\bigr)
			\otimes_{C_0(Y)}x\\
			&=
			\bigl(\widehat{\xi}\otimes_{C_0(X)}[f^\#]\bigr)
			\otimes_{C_0(Y)}x\\
			&=
			\widehat{\xi}\otimes_{C_0(X)}
			\bigl([f^\#]\otimes_{C_0(Y)}x\bigr)\\
			&=\xi\cdot f_*x.
		\end{aligned}
		\]
		
		For~(2), let $q:C_0(X)\to C_0(Z)$ be the restriction
		homomorphism. Its boundary class is
		\[
		\delta_q\in KK_1^\Gamma(C_0(Z),C_0(U)).
		\]
		By \cite[Proposition~3.4.5]{Raven04}, 
		\[
		\widehat{\xi|_Z}\otimes_{C_0(Z)}\delta_q
		=
		\delta_q\otimes_{C_0(U)}\widehat{\xi|_U}.
		\]
		Using \eqref{eq:boundary-product}, \eqref{eq:action}  and associativity,
		we obtain
		\[
		\begin{aligned}
			\partial(\xi|_U\cdot x)
			&=
			\delta_q\otimes_{C_0(U)}
			\bigl(\widehat{\xi|_U}\otimes_{C_0(U)}x\bigr)\\
			&=
			\bigl(\delta_q\otimes_{C_0(U)}\widehat{\xi|_U}\bigr)
			\otimes_{C_0(U)}x\\
			&=
			\bigl(\widehat{\xi|_Z}\otimes_{C_0(Z)}\delta_q\bigr)
			\otimes_{C_0(U)}x\\
			&=
			\widehat{\xi|_Z}\otimes_{C_0(Z)}
			\bigl(\delta_q\otimes_{C_0(U)}x\bigr)\\
			&=\xi|_Z\cdot\partial(x).
		\end{aligned}
		\]
		Finally, both identities extend $\mathbb{Q}$-bilinearly
		to the corresponding rationalized groups.
	\end{proof}
	
	\subsection{The idempotents supplied by the Chern character}
	
	L\"uck and Oliver constructed, for every proper
	$\Gamma$-CW-complex $X$, a natural equivariant Chern character
	\[
	\mathrm{ch}_X^*:
	K_\Gamma^*(X)
	\longrightarrow
	H_\Gamma^*(X;\mathbb{Q}\otimes R(-));
	\]
	see \cite[Section~5]{LO01}.
	Here the target is Bredon cohomology, and $R(-)$ is the
	coefficient system assigning the complex representation ring
	$R(H)$ to $\Gamma/H$ for each finite subgroup $H\leq\Gamma$,
	with structure maps induced by restriction and conjugation.
	The Chern character sends $K_\Gamma^0(X)$ to even-degree
	cohomology and $K_\Gamma^1(X)$ to odd-degree cohomology.
	It preserves products and commutes with boundary maps in
	Mayer--Vietoris sequences.
	For finite proper $\Gamma$-CW-complexes, it becomes
	an isomorphism of rings after rationalization.
	
	\begin{prop}[{\cite[Theorem~5.5]{LO01}}]\label{fact:chern}
		For any finite proper $\Gamma$-CW-complex $X$, the equivariant Chern
		character extends to an isomorphism of rings
		\[
		\mathrm{ch}_X^*\otimes\mathbb{Q}:
		K_\Gamma^*(X)_\mathbb{Q}
		\overset{\cong}{\longrightarrow}
		H_\Gamma^*(X;\mathbb{Q}\otimes R(-)).
		\]
		This isomorphism is natural in $X$.
	\end{prop}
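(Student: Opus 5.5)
The plan is to show that both sides of $\mathrm{ch}_X^*\otimes\mathbb{Q}$ are $\mathbb{Z}/2$-graded equivariant cohomology theories on finite proper $\Gamma$-CW-pairs, and that the Chern character is a natural transformation between them. One then checks that it is an isomorphism on single orbits and concludes by induction on the number of equivariant cells.

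On the source, rationalized equivariant $K$-theory $K_\Gamma^*(-)_{\mathbb{Q}}$ is a $\mathbb{Z}/2$-graded proper equivariant cohomology theory, because tensoring with $\mathbb{Q}$ is exact. On the target, set $H^{\mathrm{ev}}=\prod_{k}H^{2k}_\Gamma(-;\mathbb{Q}\otimes R(-))$ and $H^{\mathrm{odd}}=\prod_k H^{2k+1}_\Gamma(-;\mathbb{Q}\otimes R(-))$. Bredon cohomology with a fixed coefficient system has long exact sequences of pairs, excision and Mayer--Vietoris, so this is also a $\mathbb{Z}/2$-graded cohomology theory.

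The key input is the construction of $\mathrm{ch}$ itself as a natural, multiplicative map compatible with the boundary maps of pairs. Here I would follow L\"uck--Oliver.
\begin{enumerate}
\item Use that $\mathbb{Q}\otimes R(-)$ is a rational Mackey-type coefficient system. By Artin induction and character theory it splits, via idempotents indexed by conjugacy classes of finite cyclic subgroups $C\le\Gamma$, into pieces that are projective or injective as coefficient systems.
\item Deduce that Bredon cohomology with these coefficients is computed by ordinary rational cohomology of the quotients $X^C/C_\Gamma(C)$, twisted by $\mathbb{Q}(\zeta_{|C|})$-data.
\item Define $\mathrm{ch}$ on a $\Gamma$-vector bundle $V$ by restricting to the fixed sets $X^C$ and splitting $V|_{X^C}$ into eigenbundles for a generator of $C$. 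Take the ordinary Chern characters of these eigenbundles (averaged over the centralizer), weighted by the corresponding characters. Extend to $K^1$ by suspension.
\item Check multiplicativity and naturality directly from the properties of the ordinary Chern character. Boundary compatibility follows from the suspension isomorphism.
\end{enumerate}
I expect this step to be the main obstacle. The delicate point is matching the splitting of $\mathbb{Q}\otimes R(-)$ exactly with the eigenbundle decomposition, so that the resulting classes genuinely land in Bredon cohomology and are compatible with restriction along inclusions of finite subgroups.

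Granting the transformation, the base case is an orbit $\Gamma/H\times(D^n,S^{n-1})$ with $H$ finite. Here $K^*_\Gamma(\Gamma/H\times(D^n,S^{n-1}))\cong K^*_H(D^n,S^{n-1})\cong R(H)$, concentrated in degree $n \bmod 2$, by induction of structure and equivariant Bott periodicity for the trivial action of $H$ on $D^n$. The Bredon side is $\mathbb{Q}\otimes R(H)$ in degree $n$. The map $\mathrm{ch}$ is the rationalization of this identification: the character-theoretic formula reduces, on a point, to the isomorphism $R(H)\otimes\mathbb{Q}\cong\mathbb{Q}\otimes R(H)$, and compatibility with suspension handles $n>0$.

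For the induction, write the finite complex as obtained from a subcomplex $X'$ by attaching a single equivariant cell $\Gamma/H\times D^n$. Apply $\mathrm{ch}$ to the long exact sequences of the pair $(X,X')$; by excision, the relative terms are those of $\Gamma/H\times(D^n,S^{n-1})$. The five lemma, with the induction hypothesis for $X'$ and the base case, gives the isomorphism for $X$. The ring structure is preserved because $\mathrm{ch}$ is multiplicative, and naturality in $X$ is built into the construction.
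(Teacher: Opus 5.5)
Your plan is correct and follows the standard argument behind this result. The paper gives no proof of its own and simply quotes L\"uck--Oliver: once $\mathrm{ch}$ is a multiplicative natural transformation of $\mathbb{Z}/2$-graded cohomology theories commuting with boundary maps, checking it on $\Gamma/H\times(D^n,S^{n-1})$ and running the five lemma over the finitely many equivariant cells gives the rational ring isomorphism.

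One point in the construction you defer needs fixing. For infinite $C_\Gamma(C)$ you cannot average cohomology classes, and $C_\Gamma(C)$-invariant classes on $X^C$ do not give classes on $C_\Gamma(C)\backslash X^C$; this already fails for $C=1$, $\Gamma=\mathbb{Z}$, $X=\mathbb{R}$. Instead, take the Chern characters of the eigenbundles on the Borel construction $EC_\Gamma(C)\times_{C_\Gamma(C)}X^C$, whose rational cohomology is that of $C_\Gamma(C)\backslash X^C$ because all isotropy groups are finite.
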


	For each finite subgroup $H\leq\Gamma$, set
	\[
	\varepsilon_H:=\frac{[\mathbb{C}H]}{|H|}
	\in R(H)_{\mathbb{Q}},
	\]
	where $\mathbb{C}H$ is the complex left regular representation of $H$.
	It is an idempotent, i.e., $\varepsilon_H^2=\varepsilon_H$.
	The following lemma is an immediate consequence of
	\cite[Theorem~5.5]{LO01}.
	\begin{lem}\label{lem:idempotent}
		For each finite proper $\Gamma$-CW-complex $X$, there is an
		idempotent $e_X\in K_\Gamma^0(X)_{\mathbb{Q}}$ with the
		following properties:
		\begin{enumerate}
			\item
			Under $K_\Gamma^0(\Gamma/H)_{\mathbb{Q}}
			\cong R(H)_{\mathbb{Q}}$,
			\[
			e_{\Gamma/H}=\varepsilon_H.
			\]
			
			\item
			For every equivariant map $f:X\to X'$ between finite proper
			$\Gamma$-CW-complexes,
			\[
			f^*e_{X'}=e_X.
			\]
			
			\item
			If the $\Gamma$-action on $X$ is free, then
			\[
			e_X=1,
			\]
			where $1$ is the multiplicative unit of
			$K_\Gamma^0(X)_{\mathbb{Q}}$.
		\end{enumerate}
	\end{lem}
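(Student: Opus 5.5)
We will construct $e_X$ on the Bredon-cohomology side and transport it back through the rational Chern character of Proposition~\ref{fact:chern}.

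\textbf{Step 1: a global section of the coefficient system.} The coefficient system $\mathbb{Q}\otimes R(-)$ is contravariant on the orbit category of finite subgroups. For a $\Gamma$-map $\Gamma/H\to\Gamma/K$, given by $H\le gKg^{-1}$, the structure map is conjugation followed by restriction $R(K)_{\mathbb{Q}}\to R(H)_{\mathbb{Q}}$. Restriction of the regular representation gives $\operatorname{Res}^K_H[\mathbb{C}K]=[K:H]\,[\mathbb{C}H]$, hence
\[
\operatorname{Res}^K_H\varepsilon_K=\frac{[K:H]}{|K|}[\mathbb{C}H]=\varepsilon_H .
\]
Conjugation also preserves the $\varepsilon$'s. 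Thus $H\mapsto\varepsilon_H$ is a compatible family, i.e.\ an element of $\varprojlim_{\Gamma/H}\mathbb{Q}\otimes R(H)$.

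\textbf{Step 2: the class in $H^0_\Gamma$ and its idempotence.} For any proper $\Gamma$-CW-complex $X$, $H^0_\Gamma(X;\mathbb{Q}\otimes R(-))$ consists of the compatible assignments of an element of $\mathbb{Q}\otimes R(\Gamma_\sigma)$ to each $0$-cell $\sigma$, subject to the cocycle condition along $1$-cells. Assigning $\varepsilon_{\Gamma_\sigma}$ to each vertex is such a cocycle. Across an edge, both endpoint values restrict to $\varepsilon$ of the edge stabilizer by Step~1. This gives a degree-$0$ class $\eta_X$. The cup product in degree $0$ is pointwise multiplication in the representation rings, so $\eta_X^2=\eta_X$ because each $\varepsilon_H^2=\varepsilon_H$. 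Naturality $f^*\eta_{X'}=\eta_X$ follows from Step~1: $f$ maps a cell with stabilizer $H$ into a cell with a larger stabilizer, and the induced coefficient map is restriction.

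\textbf{Step 3: transport and verification.} Define
\[
e_X:=(\mathrm{ch}_X^*\otimes\mathbb{Q})^{-1}(\eta_X)\in K^0_\Gamma(X)_{\mathbb{Q}} .
\]
We need $\mathrm{ch}^{-1}(\eta_X)$ to land in $K^0$. This holds because the Chern character sends $K^0_\Gamma(X)$ to even degrees and $K^1_\Gamma(X)$ to odd degrees, so on even degrees its inverse takes values in $K^0$. The three properties then follow in turn.
\begin{enumerate}
\item Idempotence of $e_X$ comes from the ring isomorphism in Proposition~\ref{fact:chern}.
\item Property~(2) follows from naturality of $\mathrm{ch}$ together with Step~2.
\item For (1), take $X=\Gamma/H$. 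Then $H^*_\Gamma$ is concentrated in degree $0$ and equals $\mathbb{Q}\otimes R(H)$, and the Lück–Oliver Chern character is the identity on $K^0_\Gamma(\Gamma/H)_{\mathbb{Q}}\cong R(H)_{\mathbb{Q}}$. Hence $e_{\Gamma/H}=\varepsilon_H$.
\item For (3), if the action is free then every stabilizer is trivial and $\varepsilon_{\{1\}}=1\in R(1)$. So $\eta_X$ is the unit class and $e_X=1$.
\end{enumerate}

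The main obstacle is the normalization used in (1), namely that $\mathrm{ch}$ on orbits $\Gamma/H$ is precisely the identification $K^0_\Gamma(\Gamma/H)\cong R(H)$ rather than some twisted version. If Lück–Oliver's construction involves a twist there, we would instead define $\eta_X$ directly as whatever class corresponds to $\varepsilon$ on orbits and re-check that it is compatible with restriction. A secondary point is confirming that the degree-$0$ cup product is pointwise, which is standard.
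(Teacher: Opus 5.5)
Your proposal is correct and follows the paper's route: you build the degree-zero Bredon class from the compatible family $\varepsilon_H$, pull it back through the rational Chern character isomorphism, and read off idempotence, naturality, the orbit case and the free case. You add details the paper only asserts, namely the computation $\operatorname{Res}^K_H\varepsilon_K=\varepsilon_H$ and the cochain description of $H^0_\Gamma$; the normalization of $\mathrm{ch}$ on orbits that you flag is the same fact the paper takes for granted.
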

	\begin{proof}
		The idempotents $\varepsilon_H$ are compatible with
		restriction and conjugation, and hence define a natural
		idempotent
		\[
		c_X\in H_\Gamma^0(X;\mathbb{Q}\otimes R(-)).
		\]
		By Proposition \ref{fact:chern}, there is a unique
		$e_X\in K_\Gamma^0(X)_{\mathbb{Q}}$ such that
		\[
		\big(
		\mathrm{ch}_X^0\otimes\mathbb{Q}
		\big)(e_X)
		=(c_X,0,0,\ldots),
		\]
		where all positive-degree components are zero.
		Since the rational Chern character is a ring isomorphism, 
		$c_X^2=c_X$ implies $e_X^2=e_X$.
		Its naturality, together with $f^*c_{X'}=c_X$, gives
		$f^*e_{X'}=e_X$.
		
		For $\Gamma/H$, the rational Chern character is the identity
		under the canonical identifications with $R(H)_{\mathbb{Q}}$.
		Thus $e_{\Gamma/H}=\varepsilon_H$.
		If the $\Gamma$-action on $X$ is free, all stabilizers are
		trivial and $\varepsilon_{\{1\}}=1$, so $c_X=1$
		and hence $e_X=1$.
	\end{proof}

	\section{Proof of Theorem~\ref{thm:main}}

	Let $f:X\to X'$ be an equivariant continuous map between
	proper $\Gamma$-CW-complexes.
	For each finite $\Gamma$-subcomplex $K\subseteq X$,
	choose a finite $\Gamma$-subcomplex $K'\subseteq X'$
	containing $f(K)$.
	The restriction $f|_K:K\to K'$ is proper and therefore induces
	\[
	(f|_K)_*:KK_n^\Gamma(K,B)
	\longrightarrow KK_n^\Gamma(K',B).
	\]
	These homomorphisms are compatible with support inclusions
	and coefficient inclusions, and hence define
	\[
	f_*:KK_n^\Gamma(X,B)
	\longrightarrow KK_n^\Gamma(X',B)
	\]
	on the inductive limits.
	This construction is functorial.
	
	We apply this construction to the maps
	\[
	\begin{gathered}
		j:E\Gamma\longrightarrow E\Gamma\times\underline E\Gamma,
		\quad j(z)=(z,\pi(z)),
		\qquad
		q:E\Gamma\times\underline E\Gamma\longrightarrow E\Gamma,
		\quad q(z,x)=z,
		\\
		p:E\Gamma\times\underline E\Gamma\longrightarrow\underline E\Gamma,
		\quad p(z,x)=x.
	\end{gathered}
	\]
	Since $q\circ j=\operatorname{id}$, functoriality gives
	$q_*\circ j_*=\operatorname{id}$.
	Thus, for every $\Gamma$-$C^*$-algebra $A$, the homomorphism
	\[
	j_*:KK_*^\Gamma(E\Gamma,A)
	\longrightarrow
	KK_*^\Gamma(E\Gamma\times\underline E\Gamma,A)
	\]
	is split injective, with left inverse $q_*$.
	Since $\pi=p\circ j$, we also have
	$\pi_*=p_*\circ j_*$.
	It therefore suffices to prove that
	\[
	p_*:KK_*^\Gamma(E\Gamma\times\underline E\Gamma,A)_{\mathbb{Q}}
	\longrightarrow
	KK_*^\Gamma(\underline E\Gamma,A)_{\mathbb{Q}}
	\]
	is injective.
	Passing to inductive limits over finite $\Gamma$-subcomplexes
	of $\underline E\Gamma$ and separable $\Gamma$-invariant
	coefficient subalgebras, we reduce to proving that
	\[
	(p_X)_*:KK_*^\Gamma(E\Gamma\times X,B)_{\mathbb{Q}}
	\longrightarrow
	KK_*^\Gamma(X,B)_{\mathbb{Q}}
	\]
	is injective for every separable $\Gamma$-$C^*$-algebra $B$
	and every finite proper $\Gamma$-CW-complex $X$, where
	\[
	p_X:E\Gamma\times X\longrightarrow X,
	\qquad p_X(z,x)=x.
	\]
	
	We proceed in two steps.
	
	\paragraph{\bf The orbit case.}
	Put $E=E\Gamma$, and let $X=\Gamma/H$, where
	$H\leq\Gamma$ is a finite subgroup.

	It follows from \cite[Lemma~12.11]{GHT00}
	that the standard compression maps induce natural isomorphisms
	\[
	\operatorname{Comp}_T:
	KK_n^\Gamma(T\times X,B)_{\mathbb{Q}}
	\overset{\cong}{\longrightarrow}
	KK_n^H(T,B)_{\mathbb{Q}},
	\qquad T=E,~\mathrm{pt}.
	\]
	Here $\mathrm{pt}\times X=X$, and $B$ on the right carries
	the restricted $H$-action.
	
	Let
	\[
	r_T:KK_n^H(T,B)\longrightarrow KK_n(T,B),
	\qquad T=E,~\mathrm{pt}
	\]
	denote the forgetful maps.

	We next define the transfer map using the induction construction in
	\cite[Remark~3.6.8, Lemma~3.6.9 and
	Proposition~3.6.10]{Raven04}.
	For a finite $H$-subcomplex $\Delta\subseteq T$,
	Proposition~3.6.10 of \cite{Raven04} gives an isomorphism
	\begin{equation*}\label{eq:zhegeshiinductionmap}
		i_{\{1\}}^H:
		KK_n(C(\Delta),B)
		\overset{\cong}{\longrightarrow}
		KK_n^H\bigl(\operatorname{ind}_{\{1\}}^H C(\Delta),B\bigr).
	\end{equation*}
	Here
	$\operatorname{ind}_{\{1\}}^H C(\Delta)=C(H,C(\Delta))$,
	with $H$ acting by left translation:
	\[
	(\gamma\cdot f)(\gamma')=f(\gamma^{-1}\gamma'), \quad \gamma,\gamma'\in H.
	\]
	Define the $H$-equivariant homomorphism
	\[
	j_\Delta:C(\Delta)\longrightarrow
	\operatorname{ind}_{\{1\}}^H C(\Delta),
	\]
	by 
	\[
	j_\Delta(f)(\gamma)=\gamma^{-1}(f), \quad \gamma\in H.
	\]
	Define
	\[
	\operatorname{tr}_\Delta
	:=j_\Delta^*\circ i_{\{1\}}^H:
	KK_n(\Delta,B)\longrightarrow KK_n^H(\Delta,B).
	\]
	The naturality of $i_{\{1\}}^H$ and $j_\Delta$ makes these maps
	natural for proper equivariant maps and compatible with
	inclusions of finite supports.
	Passing to the inductive limits therefore gives the transfer maps
	\[
	\operatorname{tr}_T:
	KK_n(T,B)\longrightarrow KK_n^H(T,B),
	\qquad T=E,~\mathrm{pt}.
	\]

	After rationalization, define
	\[
	\begin{aligned}
		R_T:\quad
		KK_n^\Gamma(T\times X,B)_{\mathbb{Q}}
		&\xrightarrow{\operatorname{Comp}_T}
		KK_n^H(T,B)_{\mathbb{Q}}
		\xrightarrow{r_T}
		KK_n(T,B)_{\mathbb{Q}},\\
		J_T:\quad
		KK_n(T,B)_{\mathbb{Q}}
		&\xrightarrow{\operatorname{tr}_T}
		KK_n^H(T,B)_{\mathbb{Q}}
		\xrightarrow{\operatorname{Comp}_T^{-1}}
		KK_n^\Gamma(T\times X,B)_{\mathbb{Q}}.
	\end{aligned}
	\]
	
	We first compute the composites $J_T\circ R_T$.
	By definition,
	\[
	J_T\circ R_T
	=
	\operatorname{Comp}_T^{-1}
	\circ(\operatorname{tr}_T\circ r_T)
	\circ\operatorname{Comp}_T.
	\]
	Thus we begin with the composite
	$\operatorname{tr}_{\Delta}\circ r_{\Delta}$ for any finite $H$-subcomplex $\Delta$ of $T$,
	where $r_\Delta$ is the corresponding forgetful map
	on $\Delta$.
	
	In degree zero, by
	\cite[Proposition~20.2.4]{Blackadar},
	we may represent $x\in KK_0^H(\Delta,B)$ by an
	$H$-equivariant Kasparov module
	\[
	(\mathcal E,\varphi,F),
	\]
	where $\mathcal E$ is a countably generated graded Hilbert
	$B$-module equipped with a continuous, grading-preserving
	$H$-action,
	$\varphi:C(\Delta)\to\mathcal B(\mathcal E)$ is an
	$H$-equivariant graded $*$-homomorphism,
	and $F\in\mathcal B(\mathcal E)$ is a strictly
	$H$-invariant operator of degree one.
	For each $\gamma\in H$, let $\mathcal E_\gamma$ be a copy
	of the underlying graded vector space of $\mathcal E$,
	equipped with the Hilbert $B$-module structure
	\[
	\xi\cdot_\gamma b=\xi\,\gamma^{-1}(b),
	\qquad
	\langle\xi,\eta\rangle_\gamma
	=\gamma\bigl(\langle\xi,\eta\rangle\bigr).
	\]
	Define the representation $\varphi_\gamma$ 
	on this module by
	\[
	\varphi_\gamma(f)
	=
	\varphi\bigl(\gamma^{-1}(f)\bigr),
	\qquad f\in C(\Delta).
	\]
	Equip $\bigoplus_{\gamma\in H}\mathcal E_\gamma$
	with the $H$-action
	\[
	(\gamma\cdot\xi)_{\gamma'}
	=
	\xi_{\gamma^{-1}\gamma'},
	\qquad \gamma,\gamma'\in H, \quad \xi\in\bigoplus_{\gamma\in H}\mathcal E_\gamma.
	\]
	By the cycle description in
	\cite[Lemma~3.6.9]{Raven04}, followed by pullback along
	$j_\Delta$, the class
	$(\operatorname{tr}_\Delta\circ r_\Delta)(x)$
	is represented by
	\begin{equation}\label{eq:transfercocycleshibakwonimade}
		\left(
		\bigoplus_{\gamma\in H}\mathcal E_\gamma,\,
		\bigoplus_{\gamma\in H}\varphi_\gamma,\,
		\bigoplus_{\gamma\in H}F
		\right).
	\end{equation}

	Let $\{\delta_\gamma:\gamma\in H\}$ be the standard basis
	of $\mathbb{C}H$.
	The map
	\[
	U:\bigoplus_{\gamma\in H}\mathcal E_\gamma
	\longrightarrow\mathcal E\otimes\mathbb{C}H,
	\qquad
	U\big((\xi_\gamma)\big)
	=
	\sum_{\gamma\in H}(\gamma\cdot\xi_\gamma)\otimes\delta_\gamma,
	\]
	is a $B$-linear unitary intertwining the representations
	and operators.
	It is also $H$-equivariant for the diagonal action on
	$\mathcal E\otimes\mathbb{C}H$.
	Here $H$ acts on $\mathbb{C}H$ by the left regular
	representation.
	Consequently, the cycle \eqref{eq:transfercocycleshibakwonimade} is
	unitarily equivalent to
	\[
	\bigl(
	\mathcal E\otimes\mathbb{C}H,\,
	\varphi\otimes1,\,
	F\otimes1
	\bigr)
	\]
	with the diagonal $H$-action.
	Hence
	\begin{equation}\label{eq:zkkkkkkkkkkkkkkkkdkdkfkjd}
		(\operatorname{tr}_\Delta\circ r_\Delta)(x)
		=
		[\Delta\times\mathbb{C}H]\cdot x,
	\end{equation}
	where $[\Delta\times\mathbb{C}H]\in K_H^0(\Delta)$
	is the class of the trivial vector bundle $\Delta\times\mathbb{C}H$
	equipped
	with the diagonal $H$-action.
	Suspension gives the same identity in every degree.

	Let
	\[
	V=\Gamma\times_H\mathbb{C}H
	\]
	be the $\Gamma$-equivariant vector bundle over
	$X=\Gamma/H$, 
	with projection $[\gamma,v]\mapsto \gamma H$.
	Under the canonical identification
	$K_\Gamma^0(X)\cong R(H)$, the class $[V]$ therefore
	corresponds to $[\mathbb{C}H]$.
	By Lemma~\ref{lem:idempotent}$(1)$,
	\[
	[V]=|H|e_X
	\qquad\text{in }K_\Gamma^0(X)_{\mathbb{Q}}.
	\]
	
	For $T=\mathrm{pt}$, \eqref{eq:zkkkkkkkkkkkkkkkkdkdkfkjd}  gives
	\[
	\operatorname{tr}_{\mathrm{pt}}\circ r_{\mathrm{pt}}
	=
	[\mathrm{pt}\times\mathbb{C}H]\cdot-.
	\]
	Since the fiber of $V$ over $H\in X$ is $\mathbb{C}H$,
	the compression map is compatible with the corresponding
	vector-bundle actions: for every
	$y\in KK_n^\Gamma(X,B)_{\mathbb{Q}}$,
	\[
	\operatorname{Comp}_{\mathrm{pt}}([V]\cdot y)
	=
	[\mathrm{pt}\times\mathbb{C}H]\cdot
	\operatorname{Comp}_{\mathrm{pt}}(y).
	\]
	Consequently,
	\begin{equation}\label{eq:point-transfer-identity}
		\begin{aligned}
			J_{\mathrm{pt}}\circ R_{\mathrm{pt}}
			&=
			\operatorname{Comp}_{\mathrm{pt}}^{-1}
			\circ
			\bigl([\mathrm{pt}\times\mathbb{C}H]\cdot-\bigr)
			\circ\operatorname{Comp}_{\mathrm{pt}}\\
			&=[V]\cdot-\\
			&=|H|e_X\cdot-.
		\end{aligned}
	\end{equation}

	For $T=E$, 
	fix any finite $H$-subcomplex $\Delta$ of $E$. 
	Let
	\[
	Z_\Delta
	:=
	\Gamma\cdot(\Delta\times\{H\})
	=
	\{(\gamma z,\gamma H):
	\gamma\in\Gamma,\ z\in\Delta\}
	\subseteq E\times X.
	\]
	There is a natural $\Gamma$-equivariant homeomorphism
	\[
	\Gamma\times_H\Delta
	\overset{\cong}{\longrightarrow}Z_\Delta,
	\qquad
	[\gamma,z]\longmapsto(\gamma z,\gamma H).
	\]
	The compression map gives an isomorphism
	\[
	\operatorname{Comp}_\Delta:
	KK_n^\Gamma(Z_\Delta,B)_{\mathbb{Q}}
	\overset{\cong}{\longrightarrow}
	KK_n^H(\Delta,B)_{\mathbb{Q}}.
	\]
	
	Let $\theta_\Delta:Z_\Delta\to X$ be the restriction of the
	second-coordinate projection $E\times X\to X$, given by
	\[
	\theta_\Delta(\gamma z,\gamma H)=\gamma H,
	\qquad \gamma\in\Gamma,\quad z\in\Delta.
	\]
	Then the pullback bundle $\theta_\Delta^*V$ has the form
	\[
	\theta_\Delta^*V
	\cong
	\Gamma\times_H(\Delta\times\mathbb{C}H),
	\]
	where $H$ acts diagonally on
	$\Delta\times\mathbb{C}H$.
	In particular, its restriction to
	$\Delta\times\{H\}$ is the trivial bundle
	$\Delta\times\mathbb{C}H$ with the diagonal $H$-action.
	As in the point case, the compression map is compatible with
	these bundle actions:
	\[
	\operatorname{Comp}_\Delta\big([\theta_\Delta^*V]\cdot y\big)
	=
	[\Delta\times\mathbb{C}H]\cdot
	\operatorname{Comp}_\Delta(y),
	\qquad
	y\in KK_n^\Gamma(Z_\Delta,B)_{\mathbb{Q}}.
	\]
	
	Since the $\Gamma$-action on $E$ is free, its action on
	$Z_\Delta\subseteq E\times X$ is also free.
	Using $[V]=|H|e_X$ and
	Lemma~\ref{lem:idempotent}(2)--(3), we obtain
	\[
	[\theta_\Delta^*V]
	=
	|H|\theta_\Delta^*e_X
	=
	|H|e_{Z_\Delta}
	=
	|H|\cdot1
	\qquad
	\text{in }K_\Gamma^0(Z_\Delta)_{\mathbb{Q}}.
	\]
	Therefore, for every
	$y\in KK_n^\Gamma(Z_\Delta,B)_{\mathbb{Q}}$, combining \eqref{eq:zkkkkkkkkkkkkkkkkdkdkfkjd}, we obtain
	\[
	\begin{aligned}
		&\bigl(
		\operatorname{Comp}_\Delta^{-1}
		\circ(\operatorname{tr}_\Delta\circ r_\Delta)
		\circ\operatorname{Comp}_\Delta
		\bigr)(y)\\
		&\qquad=
		\operatorname{Comp}_\Delta^{-1}
		\bigl(
		[\Delta\times\mathbb{C}H]\cdot
		\operatorname{Comp}_\Delta(y)
		\bigr)\\
		&\qquad=[\theta_\Delta^*V]\cdot y\\
		&\qquad=|H|\cdot y.
	\end{aligned}
	\]
	As $\Delta$ ranges over the finite $H$-subcomplexes of $E$,
	the subcomplexes $Z_\Delta$ are cofinal among the finite
	$\Gamma$-subcomplexes of $E\times X$.
	The compression, forgetful and transfer maps are compatible
	with the homomorphisms induced by support inclusions.
	Consequently,
	\begin{equation}\label{eq:wonisjdlfjldsjflkdskfj}
		J_E\circ R_E=|H|\cdot\operatorname{id}.
	\end{equation}
	
	We next show that 
	\begin{equation}\label{eq:finite-group-isomorphism}
		(p_X)_*:KK_n^\Gamma(E\times X,B)_{\mathbb{Q}}
		\overset{\cong}{\longrightarrow}
		e_XKK_n^\Gamma(X,B)_{\mathbb{Q}}.
	\end{equation}
	
	Since $E$ is contractible, homotopy invariance gives an
	isomorphism
	\[
	(p_{\mathrm{pt}})_*:KK_n(E,B)_{\mathbb{Q}}
	\overset{\cong}{\longrightarrow}
	KK_n(\mathrm{pt},B)_{\mathbb{Q}},
	\]
	where 
	$p_{\mathrm{pt}}:E\to\mathrm{pt}$ is the collapse map.
	The naturality of compression, forgetful, and transfer maps give
	\begin{equation}\label{eq:zhegshishemenewoyebuzhidaos}
		R_{\mathrm{pt}}\circ(p_X)_*=(p_{\mathrm{pt}})_*\circ R_E,
		\qquad
		(p_X)_*\circ J_E=J_{\mathrm{pt}}\circ (p_{\mathrm{pt}})_*.
	\end{equation}
	In particular, the first identity is expressed by the
	commutative diagram
	\[
	\begin{CD}
		KK_n^\Gamma(E\times X,B)_{\mathbb{Q}}
		@>{(p_X)_*}>>
		KK_n^\Gamma(X,B)_{\mathbb{Q}}\\
		@V{R_E}VV
		@VV{R_{\mathrm{pt}}}V\\
		KK_n(E,B)_{\mathbb{Q}}
		@>{(p_{\mathrm{pt}})_*}>>
		KK_n(\mathrm{pt},B)_{\mathbb{Q}}.
	\end{CD}
	\]
	
	Define
	\[
	\sigma:=\frac{1}{|H|}
	J_E\circ (p_{\mathrm{pt}})_*^{-1}\circ R_{\mathrm{pt}}:
	KK_n^\Gamma(X,B)_{\mathbb{Q}}
	\longrightarrow
	KK_n^\Gamma(E\times X,B)_{\mathbb{Q}}.
	\]
	Using \eqref{eq:point-transfer-identity}, \eqref{eq:wonisjdlfjldsjflkdskfj} and \eqref{eq:zhegshishemenewoyebuzhidaos}, 
	we compute
	\[
	\begin{aligned}
		\sigma\circ(p_X)_*
		&=\frac{1}{|H|}
		J_E\circ (p_{\mathrm{pt}})_*^{-1}\circ R_{\mathrm{pt}}\circ(p_X)_*\\
		&=\frac{1}{|H|}
		J_E\circ (p_{\mathrm{pt}})_*^{-1}\circ (p_{\mathrm{pt}})_*\circ R_E\\
		&=\frac{1}{|H|}J_E\circ R_E\\
		&=\operatorname{id},
	\end{aligned}
	\]
	and
	\[
	\begin{aligned}
		(p_X)_*\circ\sigma
		&=\frac{1}{|H|}
		(p_X)_*\circ J_E\circ (p_{\mathrm{pt}})_*^{-1}\circ R_{\mathrm{pt}}\\
		&=\frac{1}{|H|}
		J_{\mathrm{pt}}\circ (p_{\mathrm{pt}})_*\circ (p_{\mathrm{pt}})_*^{-1}\circ R_{\mathrm{pt}}\\
		&=\frac{1}{|H|}
		J_{\mathrm{pt}}\circ R_{\mathrm{pt}}\\
		&=e_X\cdot-.
	\end{aligned}
	\]
	The first identity proves the injectivity of $(p_X)_*$.
	Moreover, for every
	$x\in KK_n^\Gamma(E\times X,B)_{\mathbb{Q}}$,
	\[
	e_X\cdot\big((p_X)_*(x)\big)
	=\bigl((p_X)_*\circ\sigma\circ(p_X)_*\bigr)(x)
	=(p_X)_*(x),
	\]
	so the image of $(p_X)_*$ lies in the $e_X$-summand.
	Conversely, if
	$y\in e_XKK_n^\Gamma(X,B)_{\mathbb{Q}}$, then
	\[
	\bigl((p_X)_*\circ\sigma\bigr)(y)=e_X\cdot y=y.
	\]
	We have therefore proved
	\eqref{eq:finite-group-isomorphism}.
	The inverse of \eqref{eq:finite-group-isomorphism} is the restriction of $\sigma$ to the summand
	on the right.

	\paragraph{\bf Finite proper $\Gamma$-CW-complex case.}
	We also set $E=E\Gamma$.
	For any  finite proper $\Gamma$-CW-complex $X$, we claim that
	\begin{equation}\label{eq:projection-isomorphism}
		(p_X)_*:KK_n^\Gamma(E\times X,B)_{\mathbb{Q}}
		\overset{\cong}{\longrightarrow}
		e_XKK_n^\Gamma(X,B)_{\mathbb{Q}}.
	\end{equation}
	
	For brevity, write
	\[
	F_n(W):=KK_n^\Gamma(E\times W,B)_{\mathbb{Q}},
	\qquad
	G_n(W):=e_WKK_n^\Gamma(W,B)_{\mathbb{Q}}
	\]
	for any finite proper $\Gamma$-CW-complex $W$.
	Let $x\in F_n(W)$ be represented by
	$x_L\in KK_n^\Gamma(L,B)_{\mathbb{Q}}$ on a finite
	$\Gamma$-subcomplex $L\subseteq E\times W$.
	The restricted projection $p_W|_L:L\to W$ is proper.
	Since $L$ is $\Gamma$-free, Lemma~\ref{lem:idempotent}(2)--(3) gives
	\[
	(p_W|_L)^*e_W=e_L=1.
	\]
	Proposition \ref{prop:action}$(1)$ therefore yields
	\[
	\begin{aligned}
		e_W\cdot\big((p_W)_*(x)\big)
		&=e_W\cdot\big((p_W|_L)_*(x_L)\big)\\
		&=(p_W|_L)_*\bigl((p_W|_L)^*e_W\cdot x_L\bigr)\\
		&=(p_W|_L)_*(x_L)\\
		&=(p_W)_*(x).
	\end{aligned}
	\]
	This  shows that $(p_W)_*$
	takes values in $G_n(W)$ for any finite proper $\Gamma$-CW-complex $W$.

	We proceed by induction on $\dim X$.
	If $\dim X=0$, then
	\[
	X=\coprod_{\alpha=1}^{r}\Gamma/H_\alpha.
	\]
	The groups $F_n(X)$ and $G_n(X)$ decompose as direct sums
	over these orbits. Under these decompositions,
	\[
	(p_X)_*
	=
	\bigoplus_{\alpha=1}^{r}(p_{\Gamma/H_\alpha})_*.
	\]
	Each summand is an isomorphism by
	\eqref{eq:finite-group-isomorphism}, so $(p_X)_*$ is
	an isomorphism as well.

	Suppose that $m=\dim X\geq1$, and put $Z=X^{(m-1)}$.
	Write the $m$-cells as
	$\Gamma/H_\alpha\times D^m$, and set
	\[
	C:=\coprod_\alpha\Gamma/H_\alpha.
	\]
	Excision and suspension identify the relative terms for
	$(E\times X,E\times Z)$ and $(X,Z)$ with the corresponding
	groups of $C$, shifted by $m$.
	Thus the projections give a commutative diagram of exact sequences
	\begin{equation}\label{eq:cellular-induction-diagram}
		\begin{CD}
			F_{n+1-m}(C) @>>> F_n(Z) @>>> F_n(X)
			@>>> F_{n-m}(C) @>>> F_{n-1}(Z)\\
			@V{\cong}VV @V{\cong}VV @V{(p_X)_*}VV
			@V{\cong}VV @V{\cong}VV\\
			G_{n+1-m}(C) @>>> G_n(Z) @>>> G_n(X)
			@>>> G_{n-m}(C) @>>> G_{n-1}(Z).
		\end{CD}
	\end{equation}
	To verify exactness of the lower row,
	put
	\[
	M_k(W):=KK_k^\Gamma(W,B)_{\mathbb{Q}}.
	\]
	After excision and suspension, compatibility of the bundle
	actions with the long exact sequence for $(X,Z)$ gives
	the commutative diagram of exact sequences
	\[
	\begin{CD}
		M_{n+1-m}(C) @>>> M_n(Z) @>>> M_n(X)
		@>>> M_{n-m}(C) @>>> M_{n-1}(Z)\\
		@V{e_C\cdot-}VV @V{e_Z\cdot-}VV @V{e_X\cdot-}VV
		@V{e_C\cdot-}VV @V{e_Z\cdot-}VV\\
		M_{n+1-m}(C) @>>> M_n(Z) @>>> M_n(X)
		@>>> M_{n-m}(C) @>>> M_{n-1}(Z).
	\end{CD}
	\]
	Taking the images of these vertical idempotents gives
	the lower row of \eqref{eq:cellular-induction-diagram}, 
	which is
	therefore exact.
	
	The four outer vertical maps of \eqref{eq:cellular-induction-diagram} are isomorphisms by the orbit
	case and the induction hypothesis, so the Five Lemma proves
	\eqref{eq:projection-isomorphism}.

\end{document}